\newtheorem{theorem}{Theorem}[section]
\newtheorem{proposition}[theorem]{Proposition}
\newtheorem{definition}[theorem]{Definition}
\newtheorem{problem}[theorem]{Problem}
\newcommand{\ev}{\Diamond}
\newcommand{\gl}{\Box}
\newcommand{\un}{\textrm{ }\mathcal U}
\newcommand{\notltl}{\neg}
\newcommand{\andltl}{\wedge}
\newcommand{\orltl}{\vee}
\newcommand{\nextltl}{\bigcirc}
\newcommand{\be}{\begin{equation}}
\newcommand{\ee}{\end{equation}}
\newcommand{\ben}{\begin{equation*}}
\newcommand{\een}{\end{equation*}}
\newcommand{\bea}{\begin{eqnarray}}
\newcommand{\eea}{\end{eqnarray}}
\newcommand{\bean}{\begin{eqnarray*}}
\newcommand{\eean}{\end{eqnarray*}}
\newcommand{\ba}{\begin{array}}
\newcommand{\ea}{\end{array}}
\newcommand{\leftm}{\left[\begin{array}}
\newcommand{\rightm}{\end{array}\right]}
\newcommand{\ie}{{\it i.e., }}
\newcommand{\eg}{{\it e.g., }}
\def\qed{\hfill\rule[-1pt]{5pt}{5pt}\par\medskip}
\def\qed{\hfill\rule[-1pt]{5pt}{5pt}\par\medskip}
\newcommand\oprocendsymbol{\hbox{$\bullet$}}
\newcommand\oprocend{\relax\ifmmode\else\unskip\hfill\fi\oprocendsymbol}
\title{LTL Control in Uncertain Environments \\with Probabilistic Satisfaction Guarantees\\ - Technical Report -} 
\author{Xu Chu Ding \qquad Stephen L. Smith \qquad Calin Belta \qquad Daniela Rus\thanks{This work was supported in part by ONR-MURI N00014-09-1051, ARO W911NF-09-1-0088, AFOSR YIP FA9550-09-1-020, and NSF CNS-0834260.}
\thanks{X. C. Ding and C. Belta are with Department of Mechanical Engineering, Boston University, Boston, MA 02215, USA (email: {\{xcding; cbelta\}@bu.edu)}.   S. L. Smith is with the Department of Electrical and Computer Engineering, University of Waterloo, Waterloo ON, N2L 3G1 Canada (email: stephen.smith@uwaterloo.ca).  D. Rus is with the Computer Science and Artificial Intelligence Laboratory, Massachusetts Institute of Technology, Cambridge, MA 02139, USA  (email: rus@csail.mit.edu).}
}         
\begin{document}
\maketitle \thispagestyle{empty} \pagestyle{empty}

\begin{abstract} 
  We present a method to generate a robot control strategy that maximizes the probability to accomplish a task.  The task is given as a Linear Temporal Logic (LTL) formula over a set of properties that can be satisfied at the regions of a partitioned environment.  We assume that the probabilities with which the properties are satisfied at the regions are known, and the robot can determine the truth value of a proposition only at the current region. Motivated by several results on partitioned-based abstractions, we assume that the motion is performed on a graph. To account for noisy sensors and actuators, we assume that a control action enables several transitions with known probabilities.  We show that this problem can be reduced to the problem of generating a control policy for a Markov Decision Process (MDP) such that the probability of satisfying an LTL formula over its states is maximized.  We provide a complete solution for the latter problem that builds on existing results from probabilistic model checking. We include an illustrative case study.  \end{abstract}

\section{Introduction}
\label{sec:intro}

Recently there has been an increased interest in using temporal logics, such as Linear Temporal Logic (LTL) and Computation Tree Logic (CTL) as motion specification languages for robotics  \cite{Hadas-ICRA07,Karaman_mu_09,KB-TAC08-LTLCon,Loizou04,Quottrup04,Tok-Ufuk-Murray-CDC09}.  Temporal logics are appealing because they provide formal, high level languages in which to describe complex missions, {\it e.g.,} ``Reach $A$, then $B$, and then $C$, in this order, infinitely often. Never go to $A$. Don't go to $B$ unless $C$ or $D$ were visited."  In addition, off-the-shelf model checking algorithms \cite{Clarke99,Emerson90} and temporal logic game strategies \cite{Piterman-2006} can be used to verify the correctness of robot trajectories and to synthesize robot control strategies.

Motivated by several results on finite abstractions of control systems, in this paper we assume that the motion of the robot in the environment is modeled as a  finite labeled transition system. This can be obtained by simply partitioning the environment and labeling the edges of the corresponding quotient graph according to the motion capabilities of the robot among the
regions. Alternatively, the partition can be made in the state space of the
robot dynamics, and the transition system is then a finite abstraction of a
continuous or hybrid control system \cite{Alur00,Pappas03}. 

The problem of controlling a finite transition system from a temporal logic specification has received a lot of attention during recent years. All the existing works assume that the current state can be precisely determined.  If the result of a control action is deterministic ({\it i.e.,} at each state, an available control enables exactly one transition), control strategies from specifications given as LTL formulas can be found through a simple adaptation of off-the-shelf model checking algorithms \cite{KB-TAC08-LTLCon}. If the control is nondeterministic (an available control at a state enables one of several transitions, and their probabilities are not known), the control problem from an LTL specification can be mapped to the solution of a Rabin game~\cite{Thomas02}
, or simpler B\"uchi and GR(1) games if the specification is restricted to fragments of LTL~\cite{Hadas-ICRA07}.  If the control is probabilistic (an available control at a state enables one of several transitions, and their probabilities are known), the transition system is a Markov Decision Process (MDP).  The control problem then reduces to generating a policy (adversary) for an MDP such that the produced language satisfies a formula of a probabilistic temporal logic \cite{Alfaro95modelchecking,KNP04b}. We have recently developed a framework for deriving an MDP control strategy from a formula in a fragment of probabilistic CTL (pCTL) \cite{LaWaAnBe-ICRA10}.  For probabilistic LTL, in \cite{baier2004controller}, a control strategy is synthesized for an MDP where some states are under control of the environment, so that an LTL specification is guaranteed to be satisfied under all possible environment behaviors.  The temporal logic control problems for systems with probabilistic or nondeterministic state-observation models, which include the class of Partially Observable Markov Decision Processes \cite{PineauT:AAAI:2002,ZhangZ:JAIR:2001}, are currently open.

In this paper, we consider motion specifications given as arbitrary LTL formulas over a set of properties that can be satisfied with given probabilities at the vertices of a graph environment.  We assume that the truth values of the properties can be observed only when a vertex is reached in the environment, and the observations of these properties are independent with each other.  We assume a probabilistic robot control model and that the robot can determine its current vertex precisely. Under these assumptions, we develop an algorithm to generate a control strategy that maximizes the probability of satisfying the specification. Our approach is based on mapping this problem to the problem of generating a control policy for a MDP such that the probability of satisfying an LTL formula is maximized. We provide a solution to this problem by drawing inspiration from probabilistic model checking.  We illustrate the method by applying it to a numerical example of a robot navigating in an indoor environment.

The contribution of this work is twofold. First, we adapt existing approaches in probabilistic model checking (\eg \cite{baier2008principles,vardi1999probabilistic}), and provide a complete solution to the general problem of controlling MDPs from full LTL specifications using deterministic Rabin automata. This is a significant departure from our previous work on MDP control from pCTL formulas \cite{LaWaAnBe-ICRA10}, since it allows for strictly richer specifications.  The increase in expressivity is particularly important in many robotic applications where the robot is expected to perform some tasks, such as surveillance, repeatedly.  However, it comes at the price of increased computational complexity. Second, we allow for non-determinism not only in the robot motion, but also in the robot's observation of properties in the environment.  This allows us to model a large class of robotic problems in which the satisfaction of properties of interest can be predicted only  probabilistically.  For example, we can model a task where a robot is operating in an indoor environment, and is required to pick-up and deliver items among some rooms.  The robot determines its current location using RFID tags on the floors and walls.  Non-determinism occurs in observations because items may or may not be available when a robot visits a room. Non-determinism also occurs in the motion due to imprecise localization or control actuation.

The remainder of the paper is organized as follows: In Section~\ref{sec:prelim} we introduce the necessary definitions and preliminary results.  In Section~\ref{sec:probformulation} we formulate the problem and describe the technical approach.  In Section~\ref{sec:problemreform} we reformulate this problem onto a MDP and show that two problems are equivalent.  We synthesis our controls strategy in Section~\ref{sec:controlsyn}, and an example of the provided algorithm is shown in Section~\ref{sec:example}.  We conclude in Section~\ref{sec:concl}.
 
\section{Preliminaries}
\label{sec:prelim}
In this section we provide background material on linear temporal logic and Markov decision processes.

\subsection{Linear Temporal Logic}
We employ Linear Temporal Logic (LTL) to describe high level motion specifications. A detailed description of the syntax and semantics of LTL is beyond the scope of this paper and can be found in, for example, \cite{Clarke99}.  Roughly, an LTL formula is built up from a set of atomic propositions $\Pi$, which are properties that can be either true or false, standard Boolean operators $\notltl$ (negation), $\orltl$ (disjunction), $\andltl$ (conjunction), and temporal operators $\nextltl$ (next), $\un$ (until), $\ev$ (eventually), $\gl$ (always) and $\Rightarrow$ (implication). The semantics of LTL formulas are given over words, which is defined as an infinite sequence $o=o_{0}o_{1}\ldots$, where $o_{i} \in 2^{\Pi}$ for all $i$.

We say $o\vDash\phi$ if the word $o$ satisfies the LTL formula $\phi$.  The semantics of LTL is defined recursively.  If $\phi=\pi$ is an LTL formula, where $\pi\in\Pi$, then $\phi$ is true at position $i$ of the word if $\pi\in o_{i}$.  A word satisfies an LTL formula $\phi$ if $\phi$ is true at the first position of the word; $\gl \phi$ means that $\phi$ is true at all positions of the word; $\ev \phi$ means that $\phi$ eventually becomes true in the word;  $\phi_{1}\un\phi_{2}$ means $\phi_{2}$ eventually becomes true and $\phi_{1}$ is true until this happens; $\nextltl\phi$ means that $\phi$ becomes true at next position of the word.  More expressivity can be achieved by combining the above temporal and Boolean operators (several examples are given later in the paper).  An LTL formula can be represented by a deterministic Rabin automaton, which is defined as follows.


\begin{definition}[Deterministic Rabin Automaton]
\label{def:DRA}
A deterministic Rabin automaton (DRA) is a tuple $\mathcal R=(Q,\Sigma,\delta,q_{0},F)$, where (i) $Q$ is a finite set of states; (ii) $\Sigma$ is a set of inputs (alphabet); (iii) $\delta:Q\times\Sigma\rightarrow Q$ is the transition function; (iv) $q_{0}\in Q$ is the initial state; and (v) $F=\{(L_{1},K_{1}),\dots,(L_{k},K_{k})\}$ is a set of pairs where $L_{i},K_{i}\subseteq Q$ for all $i\in\{1,\dots,k\}$.
\end{definition}

A run of a Rabin automaton $\mathcal R$, denoted by $r_{\mathcal R}=q_{0}q_{1}\ldots$, is an infinite sequence of states in $\mathcal R$ such that for each $i\geq 0$, $q_{i+1}\in\delta(q_{i},\alpha)$ for some $\alpha \in \Sigma$.   A run $r_{\mathcal R}$ is {\it accepting} if there exists a pair $(L,K)\in F$ such that 1) there exists $n\geq 0$, such that for all $m\geq n$, we have $q_{m}\notin L$, and 2) there exist infinitely many indices $k$ where $q_{k}\in K$.   
This acceptance conditions means that $r_{\mathcal R}$ is accepting if for a pair $(L,K)\in F$, $r_{\mathcal R}$ intersects with $L$ finitely many times and $K$ infinitely many times.

For any LTL formula $\phi$ over $\Pi$, one can construct a DRA with input alphabet $\Sigma= 2^{\Pi}$ accepting all and only words over $\Pi$ that satisfy $\phi$ (see \cite{gradel2002automata}).  We refer readers to \cite{klein2006experiments
} and references therein for algorithms and to freely available implementations, such as \cite{ltl2dstar}, to translate a LTL formula over $\Pi$ to a corresponding DRA.


\subsection{Markov Decision Process and probability measure}
\label{subsec:MDPandprobmeasure}

We now introduce a labeled Markov decision process, and the probability measure we will use in the upcoming sections.

\begin{definition}[Labeled Markov Decision Process]
\label{def:MDP}
A labeled Markov decision process (MDP) is a tuple $\mathcal M=(\mathcal S, \mathcal U, \mathcal A, \mathcal P, \iota, \Pi, h)$, where (i) $\mathcal S$ is a finite set of states; (ii) $\mathcal U$ is a finite set of actions; (iii) $\mathcal A:\mathcal S\to 2^{\mathcal U}$ represents the set of actions enabled at state $s\in \mathcal S$; (iv) $\mathcal P: \mathcal S\times \mathcal U\times \mathcal S\rightarrow [0,1]$ is the transition probability function such that for all states $s\in \mathcal S$, $\sum_{s'\in \mathcal S}\mathcal P(s,u,s') = 1$ if $u\in \mathcal A(s) \subseteq \mathcal U$ and $\mathcal P(s,u,s') = 0$ if $u\notin \mathcal A(s)$; 
(v) $\iota:\mathcal S\to [0,1]$ is the initial state distribution satisfying $\sum_{s\in \mathcal S}\iota(s)=1$; (vi) $\Pi$ is a set of atomic propositions; and (vii) $h: \mathcal S\rightarrow 2^{\Pi}$ is a labeling function.
\end{definition}

The quantity $\mathcal P(s,u,s')$ represents the probability of reaching the state $s'$ from $s$ taking the control $u\in \mathcal A(s)$.  

We will now define a probability measure over paths in the MDP.  To do this, we define an action function as a function $\mu:\mathcal S\rightarrow \mathcal U$ such that $\mu(s)\in \mathcal A(s)$ for all $s\in \mathcal S$.  An infinite sequence of action functions $M=\{\mu_{0},\mu_{1},\ldots\}$ is called a policy.  One can use a policy to resolve all nondeterministic choices in an MDP by applying the action $\mu_{k}(s_{k})$ at each time-step $k$.  Given an initial state $s_{0}$ such that $\iota(s_{0})>0$, an infinite sequence $r^{M}_{\mathcal M}=s_{0}s_{1}\ldots$ on $\mathcal M$ generated under a policy $M=\{\mu_{0},\mu_{1},\ldots\}$ is called a path on $\mathcal M$ if $\mathcal P(s_{i},\mu_{i}(s_{i}),s_{i+1})>0$ for all $i$. The subsequence $s_{0}s_{1}\ldots s_{n}$ is called a finite path.  If $\mu_{i}=\mu$ for all $i$, then we call this policy a stationary policy.

We define $\mathrm{Paths}^{M}_{\mathcal M}$ and $\mathrm{FPaths}^{M}_{\mathcal M}$ as the set of all infinite and finite paths of $\mathcal M$ under a policy $M$ starting from any state $s_{0}$ where $\iota(s_{0})>0$.  
We can then define a probability measure over the set $\mathrm{Paths}^{M}_{\mathcal M}$ of paths.  For a path $r_{\mathcal M}^{M}=s_{0}s_{1}\ldots s_{n}s_{n+1}\ldots \in\mathrm{Paths}^{M}_{\mathcal M}$, the \emph{prefix} of length $n$ of $r_{\mathcal M}^{M}$ is the finite subsequence $s_{0}s_{1}\ldots s_{n}$.  Let $\mathrm{Paths}^{M}_{\mathcal M}(s_{0}s_{1}\ldots s_{n})$ denote the set of all paths in $\mathrm{Paths}^{M}_{\mathcal M}$ with the prefix $s_{0}s_{1}\ldots s_{n}$.  (Note that $s_{0}s_{1}\ldots s_{n}$ is a finite path in $\mathrm{FPaths}^{M}_{\mathcal M}$.)

Then, the probability measure $\textrm{Pr}^{M}$ on the smallest $\sigma$-algebra over $\mathrm{Paths}^{M}_{\mathcal M}$ containing $\mathrm{Paths}^{M}_{\mathcal M}(s_{0}s_{1}\ldots s_{n})$ for all $s_{0}s_{1}\ldots s_{n}\in \mathrm{FPaths}^{M}_{\mathcal M}$ is the unique measure satisfying

\bea
&&\textrm{Pr}^{M}\{\mathrm{Paths}^{M}_{\mathcal M}(s_{0}s_{1}\ldots s_{n})\}\nonumber\\&=&\iota(s_{0})\prod_{0\leq i < n}\mathcal P(s_{i},\mu_{i}(s_{i}),s_{i+1}).
\eea

Finally, we can define the probability that a policy $M$ in an MDP $\mathcal M$ satisfies an LTL formula $\phi$.  
A path $r^{M}_{\mathcal M}=s_{0}s_{1}\ldots$ deterministically generates a word $o=o_{0}o_{1}\ldots$ where $o_{i}=h(s_{i})$ for all $i$. With a slight abuse of notation, we denote $h(r^{M}_{\mathcal M})$ as the word generated by $r^{M}_{\mathcal M}$.  Given an LTL formula $\phi$, one can show that the set $\{r^{M}_{\mathcal M} \in \mathrm{Paths}^{M}_{\mathcal M} : h(r^{M}_{\mathcal M}) \vDash \phi\}$ is measurable.  We define 
\be
\label{eq:probofformula}
\textrm{Pr}^{M}_{\mathcal M}(\phi):=\textrm{Pr}^{M}\{r^{M}_{\mathcal M} \in \mathrm{Paths}^{M}_{\mathcal M} : h(r^{M}_{\mathcal M}) \vDash \phi\} \ee as the probability of satisfying $\phi$ for $\mathcal M$ under policy $M$.  For more details about probability measures on MDPs under a policy and measurability of LTL formulas, we refer the reader to a text in probabilistic model checking, such as \cite{baier2008principles}.  

\section{Model, Problem Formulation, and Approach}
\label{sec:probformulation}

In this section we formalize the environment model, the robot motion model, and the robot observation model.  We then formally state our problem and provide a summary of our technical approach.

\subsection{Environment, task, and robot model}

\subsubsection{Environment model} In this paper, we consider a robot moving in a partitioned environment, which can be represented by a graph and a set of properties:
\be
\label{env}
\mathcal E=(V,\delta_{\mathcal E}, \Pi), \ee where $V$ is the set of vertices, $\delta_{\mathcal E}\subseteq V\times V$ is the relation modeling the set of edges, and $\Pi$ is the set of properties (or atomic propositions).  Such a finite representation of the environment can be obtained by using popular partition schemes, such as triangulations or rectangular grids.  The set $V$ can be considered as a set of labels for the regions in the partitioned environment, and $\delta_{\mathcal E}$ is the corresponding adjacency relation.  
In this paper we assume that there is no blocking vertex in $V$ (\ie all vertices have at least one outgoing edge).

\subsubsection{Task specification}
The atomic propositions $\Pi$ represent properties in the environment that can be true of false.  We require the motion of the robot in the environment to satisfy a rich specification given as an LTL formula $\phi$ over $\Pi$ (see Sec.~\ref{sec:prelim}).  A variety of robotic tasks can be easily translated to LTL formulas.  For example,
\begin{itemize}
\item Parking: ``Find parking lot and then park'' \\($(\ev \textrm{parking lot}) \andltl (\textrm{parking lot} \Rightarrow \nextltl \textrm{park})$)
\item Data Collection: ``Always gather data at gathering locations and then upload the data, repeat infinitely many times'' ($\gl \ev (\textrm{gather} \Rightarrow \ev \textrm{upload})$)
\item Ensure Safety: ``Achieve task $\psi$ while always avoiding states satisfying $P_{1}$ or $P_{2}$'' ($\gl \notltl(P_{1}\orltl P_{2}) \andltl \psi $).
\end{itemize}
\subsubsection{Robot motion model} The motion capability of the robot in the environment is represented by a set of motion primitives $U$, and a function $A:V\rightarrow 2^{U}$ that returns the set of motion primitives available (or enabled) at a vertex $v\in V$.  For example, $U$ can be $\{\textrm{Turn Left}, \textrm{Turn Right},\textrm{Go Straight}\}$ in an urban environment with roads and intersections.  To model non-determinism due to possible actuation or measurement errors, we define the transition probability function $P_{m}:V\times U\times V\rightarrow[0,1]$ such that $\sum_{v'\in V}P_{m}(v,u,v')=1$ for all $v\in V$ and $u\in A(v)$, and $P_{m}(v,u,v')=0$ if $(v,v')\notin\delta_{\mathcal E}$ or if $u\notin A(v)$.  
Thus, $P_{m}(v,u,v')$ is the probability that after applying the motion primitive $u$ at vertex $v$, the robot moves from $v$ to an adjacent region $v'$ without passing through other regions. 
The set $U$ corresponds to a set of feedback controllers for the robot. 
Such feedback controllers can be constructed from facet reachability (see \cite{HS04, Belta-TRO05}), and the transition probabilities can be obtained from experiments (see \cite{LaWaAnBe-ICRA10}).  Note that this model of motion uses an underlying assumption that transition probabilities of the robot controllers do not depend on the previous history of the robot.  


\subsubsection{Robot observation model} In our earlier work \cite{KB-TAC08-LTLCon}, we assumed that the motion of the robot in the partitioned environment is deterministic, and we proposed an automatic framework to produce a provably correct control strategy so that the trajectory of the robot satisfies an LTL formula.  In \cite{LaWaAnBe-ICRA10}, we relaxed this restriction and allowed non-determinism in the motion of the robot, and a control strategy for the robot was obtained to maximize the probability of satisfying a task specified by a fragment of CTL.  In both of these results, it was assumed that some propositions in $\Pi$ are associated with each region in the environment (\ie for each $v\in V$), and they are fixed in time.   

However, this assumption is restrictive and often not true in practice.  For example, the robot might move to a road and find it congested; while finding parking spots, some parking spots may already be taken; or while attempting to upload data at an upload station, the upload station might be occupied.    We wish to design control strategies that react to information which is observed in real-time, \eg if a road is blocked, then pick another route. 


Motivated by these scenarios, in this paper we consider the problem setting where observations of the properties of the environment are probabilistic.  To this end, we define a probability function $P_{o}: V\times\Pi\rightarrow [0,1]$.  Thus, $P_{o}(v,\pi)$ is the probability that the atomic proposition $\pi\in\Pi$ is observed at a vertex $v\in V$ when $v$ is visited.  We assume that all observations of atomic propositions for a vertex $v\in V$ are independent and identically distributed. This is a reasonable model in situations where the time-scale of robot travel is larger than the time scale on which the proposition changes.  For future work, we are pursuing more general observation models. 
Let $\Pi_{v}:=\{\pi\in\Pi :P_{o}(v,\pi)>0\}$ be the atomic propositions that can be observed at a vertex $v$. Then $Z_{v}=\{Z\in 2^{\Pi_{v}} : \underset{\pi\in Z}{\prod}P_{o}(v,\pi)\times \underset{\pi\notin Z}{\prod} (1-P_{o}(v,\pi))>0\}$ is the set of all possible observations at $v$.   

\subsection{Problem Formulation}

Let the initial state of the robot be given as $v_{0}$. The trajectory of the robot in the environment is an infinite sequence $r=v_{0}v_{1},\ldots $, where $P_{m}(v_{i},u,v_{i+1})>0$ for some $u$ for all $i$. Given $r=v_{0}v_{1},\ldots $, we call $v_{i}$ the state of the robot at the discrete time-step $i$.  We denote the observed atomic propositions at time-step $i$ as $o_{i}\in Z_{v_{i}}$ and $O(r)=o_{0}o_{1}\ldots$ as the word observed by $r$.  An example of a trajectory $r$ and its observed word in an environment with given $\mathcal E$, $U$, $A$, $P_{m}$ and $P_{o}$ are shown in Fig.~\ref{fig:Sys}.



\begin{figure}[h]
\begin{center}
\includegraphics[scale=0.4]{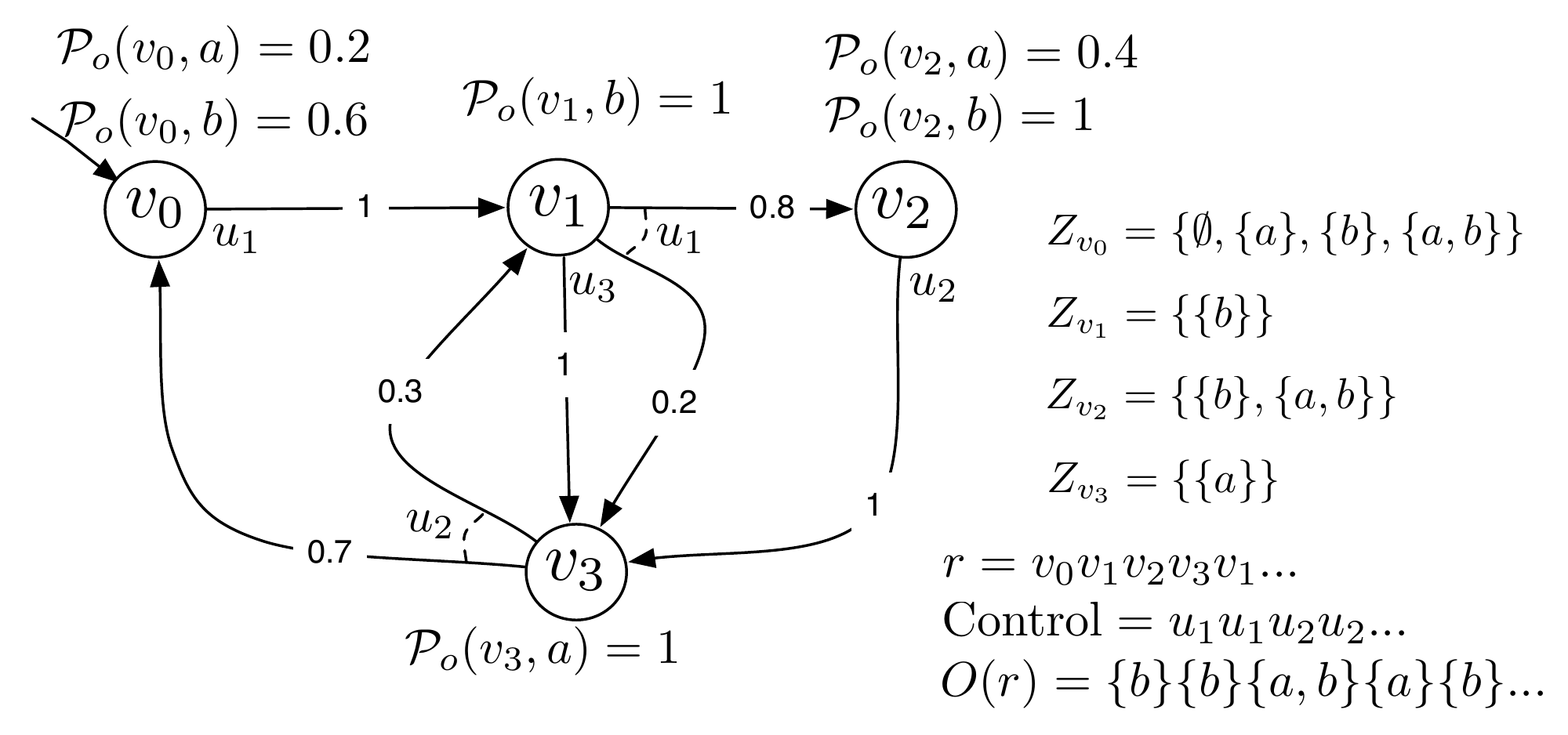}
\caption{An example trajectory $r$ and its observed word $O(r)$
  .  We also show $Z_{v}$ for all $v\in V$.  A single arrow pointed towards a state $v_{0}$ indicates the initial state. The atomic proposition set is $\Pi=\{a,b\}$.  The set of motion primitives is $U=\{u_{1},u_{2},u_{3}\}$.  The probability function $P_{o}$ assigns probabilities for all atomic propositions at each state.  We show the probability of an atomic proposition only if it is positive (\ie $\pi\in\Pi_{v}$).  The number on top of an arrow pointing from a vertex $v$ to $v'$ is the probability $P_{m}(v,u,v')$ associated with a control $u\in U$.}
\label{fig:Sys}
\end{center}
\end{figure}

Our desired ``reactive'' control strategy is in the form of an infinite sequence $C=\{\mathcal \nu_{0},\nu_{1},\ldots\}$ where $\nu_{i}:V\times2^{\Pi}\rightarrow U$ and $\nu_{i}(v, Z)$ is defined only if $Z\in Z_{v}$.  Furthermore, we enforce that $\nu_{i}(v,Z)\in A(v)$ for all $v$ and all $i$.  The reactive control strategy returns the control to be applied at each time-step, given the current state $v$ and observed set of propositions $Z$ at $v$.  Given an initial condition $v_{0}$ and a control strategy $C$, we can produce a trajectory $r=v_{0}v_{1}\ldots $ where the control applied at time $i$ is $\nu_{i}(v_{i},o_{i})$.  We call $r$ and $O(r)=o=o_{0}o_{1}\ldots $ the trajectory and the word generated under $C$, respectively.  Note that given $v_{0}$ and a control strategy $C$, the resultant trajectory and its corresponding word are not unique due to non-determinism in both motion and observation of the robot.

Now we formulate the following problem:
\begin{problem}
\label{prob:maxprob}
Given the environment represented by $\mathcal E=(V,\delta_{\mathcal E},\Pi)$; the robot motion model $U$, $A$ and $P_{m}$; the observation model $P_{o}$; and an LTL formula $\phi$ over $\Pi$, find the control strategy $C$ that maximizes the probability that the word generated under $C$ satisfies $\phi$.
\end{problem}


\subsection{Summary of technical approach}

Our approach to solve Prob.~\ref{prob:maxprob} proceeds by construction of a labeled MDP $\mathcal M$ (see Def.~\ref{def:MDP}), which captures all possible words that can be observed by the robot.  Furthermore, each control strategy $C$ corresponds uniquely to a policy $M$ on $\mathcal M$.   Thus, each trajectory with an observed word under a control strategy $C$ corresponds uniquely to a path on $\mathcal M$ under $M$.  We then reformulate Prob.~\ref{prob:maxprob} as the problem of finding the policy on $\mathcal M$ that maximizes the probability of satisfying $\phi$. These two problems are equivalent due to the assumption that all observations are independent.   We synthesize the optimal control strategy by solving maximal reachability probability problems inspired by results in probabilistic model checking.  
Our framework is more general than in \cite{LaWaAnBe-ICRA10} due to a richer specification language and non-determinism in observation of the environment.  The trade off is that computational complexity in this approach is in general much larger due to increased size of the automaton representing the specification.  

\section{MDP Construction and Problem Reformulation}
\label{sec:problemreform}
As part of our approach to solve Problem~\ref{prob:maxprob}, we construct a labeled MDP $\mathcal M=(\mathcal S, \mathcal U, \mathcal A, \mathcal P, \iota, \Pi, h)$ from the environment model $\mathcal E$, the robot motion model $U$, $A$, $P_{m}$, and the observation model $P_{o}$ as follows:
\begin{itemize}
\item $\mathcal S=\{(v,Z) \,|\, v\in V, Z\in Z_{v}\}$
\item $\mathcal U=U$
\item $\mathcal A((v,Z))=A(v)$
\item $\mathcal P((v,Z),u,(v',Z'))=$
\ben  P_{m}(v,u,v')\times\left(\underset{\pi\in Z'}{\prod}P_{o}(v',\pi)\times \underset{\pi\notin Z'}{\prod} (1-P_{o}(v',\pi))\right)
\een
\item $\iota$ is defined as 
$\iota(s)=\underset{\pi\in Z}{\prod}\mathcal P(v_{0},\pi)\times \underset{\pi\notin Z}{\prod} (1-\mathcal P(v_{0},\pi))$ if $s=(v_{0},Z)$ for any $Z\in Z_{v_{0}}$, and $\iota(s)=0$ otherwise.
\item $h((v,Z))=Z$ for all $(v,Z)\in S$.
\end{itemize}

An example of a constructed MDP is shown in Fig.~\ref{fig:MDPexp}.  One can easily verify that $\mathcal M$ is a valid MDP such that for all $s\in \mathcal S$, $\sum_{s'\in \mathcal S}\mathcal P(s,u,s') = 1$ if $u\in \mathcal A(s)$, $\mathcal P(s,u,s') = 0$ if $u\notin \mathcal A(s)$, and $\sum_{s\in \mathcal S}\iota(s)=1$.  We discuss the growth of the state space from $\mathcal E$ to $\mathcal M$ in Section~\ref{sec:complexity}.
\begin{figure}[h]
\begin{center}
\includegraphics[scale=0.44]{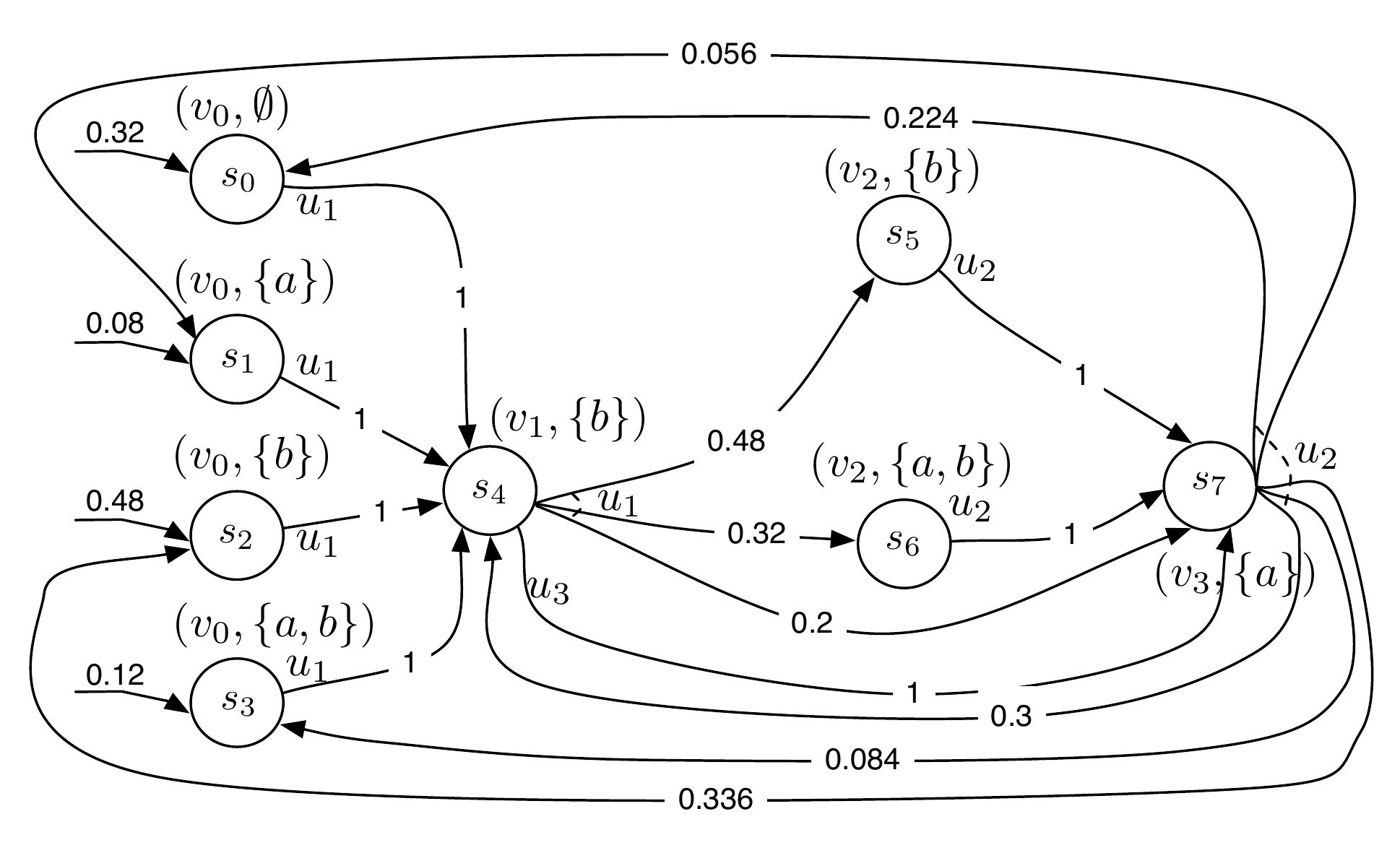}
\caption{The constructed MDP $\mathcal M$ using $\mathcal E$, $U$, $A$, $P_{m}$ and $P_{o}$ from the example in Fig.~\ref{fig:Sys}.  For each state $s\in \mathcal S$, the labels on top of the state show the components of $s$ (\ie $s=(v,Z)$).  The number on the arrow from the state $(v,Z)$ to the state $(v',Z')$ denotes the transition probability $\mathcal P((v,Z),u,(v',Z'))$ for the action $u\in\mathcal U$.  The numbers atop arrows pointing into states $(v_{0},Z_{v_{0}})$ denote the initial distribution.  The set of atomic propositions assigned to each state in $\mathcal M$ is the second component of the state.}
\label{fig:MDPexp}
\end{center}
\end{figure}

 
We now formulate a problem on the MDP $\mathcal M$.  We will then show that this new problem is equivalent to  Prob.~\ref{prob:maxprob}.
\begin{problem}
\label{prob:reformmaxprob}
For a given labeled MDP $\mathcal M$ and an LTL formula $\phi$, find a policy such that $\textrm{Pr}^{M}_{\mathcal M}(\phi)$ (see Eq.~(\ref{eq:probofformula})) is maximized.
\end{problem}

The following proposition formalizes the equivalence between the two problems, and the one-to-one correspondence between a control strategy on $\mathcal E$ and a policy on~$\mathcal M$.

\begin{proposition}[Equivalence of problems]
\label{prop:equivalence}
A control strategy $C=\{\nu_{0},\nu_{1},\ldots\}$ is a solution to Problem~\ref{prob:maxprob} if and only if
the policy $M=\{\mu_{0},\mu_{1},\ldots\}$, where
\[
\mu_{i}\big((v_{i},Z_{i})\big)=\nu_{i}(v_{i},Z_{i}) \quad \text{for each $i$},
\] 
is a solution to Problem~\ref{prob:reformmaxprob}.
\end{proposition}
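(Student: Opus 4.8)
The plan is to establish a measure-preserving bijection between the objects on the environment side and the objects on the MDP side, so that the two probabilities in question are literally equal for corresponding $C$ and $M$, which immediately yields the ``if and only if'' for the optimization problems. First I would make precise the correspondence: given a control strategy $C=\{\nu_0,\nu_1,\ldots\}$, define $M=\{\mu_0,\mu_1,\ldots\}$ by $\mu_i((v,Z))=\nu_i(v,Z)$ for all $(v,Z)\in\mathcal S$; this is well defined since $Z\in Z_v$ exactly when $(v,Z)\in\mathcal S$, and $\mu_i((v,Z))=\nu_i(v,Z)\in A(v)=\mathcal A((v,Z))$, so $M$ is a genuine policy on $\mathcal M$. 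Conversely any policy $M$ on $\mathcal M$ arises this way from a unique $C$, since $\mathcal S$, $\mathcal U$, $\mathcal A$ were built precisely so that an action function on $\mathcal M$ is the same data as a map $(v,Z)\mapsto u\in A(v)$. This gives the claimed one-to-one correspondence between control strategies and policies.

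Next I would set up the path correspondence. A trajectory of the robot under $C$ together with its observed word is a sequence $(v_0,o_0)(v_1,o_1)\ldots$, and I claim this is exactly a path $s_0s_1\ldots$ of $\mathcal M$ under $M$ with $s_i=(v_i,o_i)$. The forward direction uses that $P_m(v_i,\nu_i(v_i,o_i),v_{i+1})>0$ and that $o_{i+1}\in Z_{v_{i+1}}$, which by definition of $Z_{v_{i+1}}$ means $\prod_{\pi\in o_{i+1}}P_o(v_{i+1},\pi)\cdot\prod_{\pi\notin o_{i+1}}(1-P_o(v_{i+1},\pi))>0$; multiplying these gives $\mathcal P(s_i,\mu_i(s_i),s_{i+1})>0$, so $s_0s_1\ldots\in\mathrm{Paths}^M_{\mathcal M}$. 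The reverse direction is the same computation read backwards, using that $\mathcal P(s,u,s')>0$ forces both factors in its product to be positive. I would also note the word generated on each side agrees: $O(r)=o_0o_1\ldots$ and $h(r^M_{\mathcal M})=h(s_0)h(s_1)\ldots=o_0o_1\ldots$, since $h((v,Z))=Z$. Hence the event ``word generated under $C$ satisfies $\phi$'' corresponds, under the path bijection, to the event $\{r^M_{\mathcal M}:h(r^M_{\mathcal M})\vDash\phi\}$ that appears in \eqref{eq:probofformula}.

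The remaining, and I expect main, step is to check that the two probability measures match under this bijection. The probability that the robot's observed trajectory begins with a given finite prefix $(v_0,o_0)\ldots(v_n,o_n)$ is, by independence of the observations from the motion and across time steps, the product $\big(\prod_{\pi\in o_0}P_o(v_0,\pi)\prod_{\pi\notin o_0}(1-P_o(v_0,\pi))\big)\cdot\prod_{0\le i<n}P_m(v_i,\nu_i,v_{i+1})\big(\prod_{\pi\in o_{i+1}}P_o(v_{i+1},\pi)\prod_{\pi\notin o_{i+1}}(1-P_o(v_{i+1},\pi))\big)$; regrouping the factors shows this equals $\iota(s_0)\prod_{0\le i<n}\mathcal P(s_i,\mu_i(s_i),s_{i+1})$, which is exactly $\textrm{Pr}^M\{\mathrm{Paths}^M_{\mathcal M}(s_0\ldots s_n)\}$. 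Since both measures are the unique extensions of these cylinder-set values to the respective $\sigma$-algebras (as recalled in Section~\ref{subsec:MDPandprobmeasure}), they are carried onto one another by the path bijection. The delicate point here is being careful about exactly which independence assumption is invoked and at which time step an observation is ``drawn'' (the convention in the paper is that $o_i$ is observed upon arriving at $v_i$, which is why $\iota$ carries the initial observation factor); this bookkeeping is where a reader is most likely to want detail. Combining the three steps: for corresponding $C$ and $M$ we have $\textrm{Pr}\{\text{word under }C\vDash\phi\}=\textrm{Pr}^M_{\mathcal M}(\phi)$, so $C$ maximizes the left side over all control strategies iff $M$ maximizes the right side over all policies, which is the assertion of the proposition. \qed
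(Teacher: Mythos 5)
Your proposal is correct and follows essentially the same route as the paper's own proof: a one-to-one correspondence between control strategies and policies, a bijection between trajectories-with-observations and paths of $\mathcal M$, and agreement of the two measures on finite prefixes (cylinder sets) by regrouping the $P_m$ and $P_o$ factors into $\iota$ and $\mathcal P$, with uniqueness of the extension closing the argument. Your version merely spells out the cylinder-set computation and the role of the initial observation in $\iota$ a bit more explicitly than the paper does.
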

\begin{proof}
We can establish an one-to-one correspondence between a control strategy $C$ in the environment $\mathcal E$ and a policy $M$ on $\mathcal M$.  Given $C=\{\nu_{0},\nu_{1},\ldots\}$, we can obtain the corresponding $M=\{\mu_{0},\mu_{1},\ldots\}$ by setting $\mu_{i}((v_{i},Z_{i}))=\nu_{i}(v_{i},Z_{i})$.  Conversly, given $M=\{\mu_{0},\mu_{1},\ldots\}$, we can generate a corresponding control strategy $C=\{\nu_{0},\nu_{1},\ldots\}$ such that $\nu_{i}(v_{i},Z_{i})=\mu_{i}((v_{i},Z_{i}))$.

We need only to verify that we can use the same probability measure on paths in $\mathcal E$ and on trajectories in $\mathcal M$.
Due to the assumption that each observation at $v$ is independent, the observation process is Markovian as it only depends on which vertex the observation is made.  Note that the probability of observing $Z\in Z_{v}$ at a state $v\in V$ is $\prod_{\pi\in Z}\mathcal P(v,\pi)\times \prod_{\pi\notin Z} (1-\mathcal P(v,\pi))$.   Hence, the probability of moving from a vertex $v$ to $v'$, under control $u$, and observing $Z'\in Z_{v'}$ is exactly $\mathcal P((v,Z),u(v',Z'))$.   Therefore, the probability of observing the finite word $O(r^{f})$ for a finite trajectory $r^{f}=v_{0}\ldots v_{n}$ under $C$ is the same (by construction of $\mathcal M$) as traversing through a finite path $fr^{M}_{\mathcal M}\in \mathrm{FPaths}^{M}_{\mathcal M}$ such that $h(fr^{M}_{\mathcal M})=O(r^{f})$ under the policy $M$ corresponding to $C$.  Since this property holds for any arbitrary finite trajectory $r^{f}$, a trajectory $r$ with a word $O(r)$ under $C$ can be uniquely mapped to a path in $\mathrm{Paths}^{M}_{\mathcal M}$ and we can use the probability measure and $\sigma$-algebra (see Sec.~\ref{subsec:MDPandprobmeasure}) on $\mathcal M$ under a policy $M$ for the corresponding control strategy $C$.  Thus, if $M$ is a solution for Prob.~\ref{prob:maxprob}, then the control strategy $C$ corresponding to $M$ is a solution for Prob.~\ref{prob:reformmaxprob}, and vice versa. \qed 
\end{proof}
Due to the above proposition, we will proceed by constructing a policy $M$ on the MDP $\mathcal M$ as a solution to Prob.~\ref{prob:reformmaxprob}.  We can then uniquely map $M$ to a control strategy $C$ in the robot environment $\mathcal E$ for a solution to Prob.~\ref{prob:maxprob}.


\section{Synthesis of Control Strategy}
\label{sec:controlsyn}
In this section we provides a solution for Prob.~\ref{prob:maxprob} by synthesizing an optimal policy for Prob.~\ref{prob:reformmaxprob}.  Our approach is adapted from automata-theoretic approaches in the area of probabilistic verification and model checking (see \cite{vardi1999probabilistic} and references therein for an overview).   Probabilistic LTL model checking finds the maximum probability that a path of a given MDP satisfies a LTL specification.  We modify this method to obtain an optimal policy that achieves the maximum probability.  This approach is related to the work of \cite{courcoubetis1998markov}, in which rewards are assigned to specifications and non-deterministic B\"{u}chi automata (NBA) are used.  We do not use NBAs since a desired product MDP cannot be directly constructed from an NBA, but only from an DRA.

\subsection{The Product MDP}
We start by converting the LTL formula $\phi$ to a DRA defined in Def.~\ref{def:DRA}.  We denote the resulting DRA as $\mathcal R_{\phi}=(Q,2^{\Pi},\delta,q_{0},F)$ with $F=\{(L_{1},K_{1}),\ldots ,(L_{k},K_{k})\}$ where $L_{i},K_{i}\subseteq Q$ for all $i=1,\ldots,k$.   The DRA obtained from the LTL formula $\phi=\gl\ev a \andltl \gl\ev b$ is shown in Fig.~\ref{fig:DRA}.

\begin{figure}[h]
\begin{center}
\includegraphics[scale=0.43]{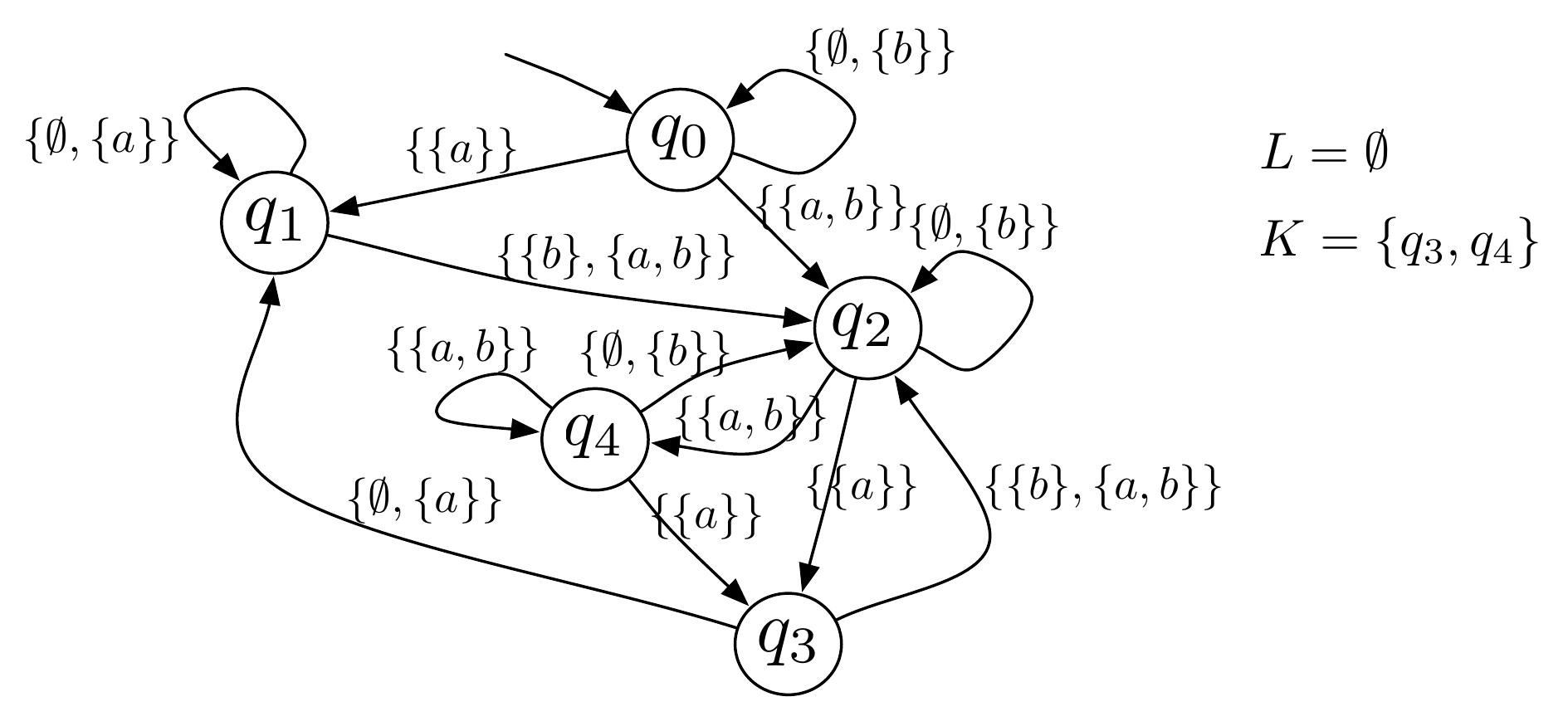}
\caption{The DRA $\mathcal R_{\phi}$ corresponding to the LTL formula $\phi=\gl\ev a \andltl \gl\ev b$.  In this example, there is one set of accepting states $F=\{(L,K)\}$ where $L=\emptyset$ and $K=\{q_{3},q_{4}\}$.   Thus, accepting runs of this DRA must visit $q_{3}$ or $q_{4}$ (or both) infinitely often. }%
\label{fig:DRA}
\end{center}
\end{figure}

We now obtain an MDP as the product of a labeled MDP $\mathcal M$ and a DRA $\mathcal R_{\phi}$.  This product MDP allows one to find runs on $\mathcal M$ that generate words satisfying the acceptance condition of $\mathcal R_{\phi}$.

\begin{definition}[Product MDP]
The product MDP $\mathcal M \times \mathcal R_{\phi}$ between a labeled MDP $\mathcal M=(\mathcal S, \mathcal U, \mathcal A, \mathcal P, \iota, \Pi, h)$ and a DRA $\mathcal R_{\phi}=(Q,2^{\Pi},\delta,q_{0},F)$ is a MDP $\mathcal M_{\mathcal P}=(\mathcal S_{\mathcal P},\mathcal U,\mathcal A_{\mathcal P}, \mathcal P_{\mathcal P},\iota_{\mathcal P})$, where:
\begin{itemize}
\item $\mathcal S_{\mathcal P}= \mathcal S\times Q$ (the Cartesian product of sets $\mathcal S$ and $Q$)
\item $\mathcal A_{\mathcal P}((s,q))=\mathcal A(s)$
\item $\mathcal P_{\mathcal P}((s,q),u,(s',q'))=$
\ben
\left\{\begin{array}{ll} \mathcal P(s,u,s') & \textrm{ if } q'=\delta(q,h(s')) \\ 0 & \rm otherwise \end{array} \right.
\een
\item $\iota_{\mathcal P}((s,q))=\iota(s)$ if $q=\delta(q_{0},h(s))$ and $\iota_{\mathcal P}=0$ otherwise.
\end{itemize}
\end{definition}
We generate the accepting state pairs $F_{\mathcal P}$ for the product MDP $\mathcal M_{\mathcal P}$ as follows: 
For a pair $(L_{i},K_{i})\in F$, a state $(s,q)$ of $\mathcal M_{\mathcal P}$ is in $L^{\mathcal P}_{i}$ if $q\in L_{i}$, and $(s,q)\in K^{\mathcal P}_{i}$ if $q\in K_{i}$.

As an example, we show in Fig.~\ref{fig:product} some of the states and transitions for the product MDP $\mathcal M_{\mathcal P}=\mathcal M\times\mathcal R_{\phi}$ where $\mathcal M$ is shown in Fig.~\ref{fig:MDPexp} and $\mathcal R_{\phi}$ is shown in Fig.~\ref{fig:DRA}.

\begin{figure}[h]
\begin{center}
\includegraphics[scale=0.5]{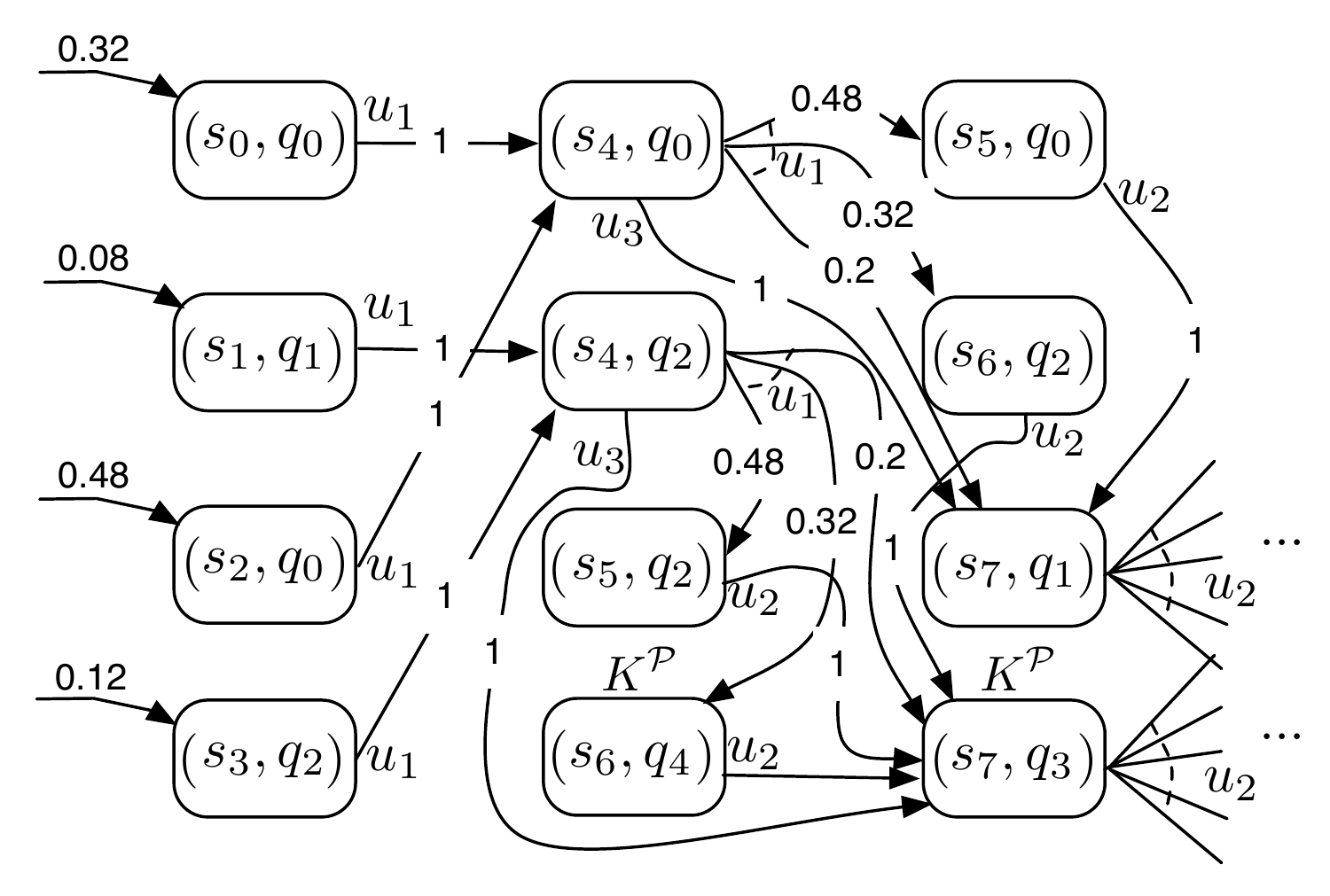}
\caption{The product MDP $\mathcal M_{\mathcal P}=\mathcal M\times\mathcal R_{\phi}$ where $\mathcal M$ is shown in Fig.~\ref{fig:MDPexp} and $\mathcal R_{\phi}$ is shown in Fig.~\ref{fig:DRA}.  Due to space restrictions, only the initial states (states for which the initial distribution $\iota_{\mathcal P}((s,q))$ is positive) and part of the state space are shown.  $F_{\mathcal P}=\{(L^{\mathcal P},K^{\mathcal P})\}$, where $L^{\mathcal P}=\emptyset$ and states in $K^{\mathcal P}$ are marked. }%
\label{fig:product}
\end{center}
\end{figure}

Note that the set of actions for $\mathcal M_{\mathcal P}$ is the same as the one for $\mathcal M$.   A policy $M_{\mathcal P}=\{\mu_{0}^{\mathcal P},\mu_{1}^{\mathcal P},\ldots\}$ on $\mathcal M_{\mathcal P}$ directly induces a policy $M=\{\mu_{0},\mu_{1},\ldots\}$ on $\mathcal M$ by keeping track of the state on the product MDP ($\mu_{i}^{\mathcal P}$ is an action function that returns an action corresponding to a state in $\mathcal M_{\mathcal P}$).  Note that given the state of $\mathcal M$ at time-step $i$ and the state of $\mathcal M_{\mathcal P}$ at time-step $i-1$, the state of $\mathcal M_{\mathcal P}$ at time-step $i$ can be exactly determined.   We can induce a policy $M$ for $\mathcal M$ from a policy $M_{\mathcal P}$ for $\mathcal M_{\mathcal P}$ as follows:
\begin{definition}[Inducing a policy for $\mathcal M$ from $\mathcal M_{\mathcal P}$]
If\\ the state of $\mathcal M_{\mathcal P}$ at time-step $i$ is $(s_{i},q_{i})$, then the policy $M=\{\mu_{0},\mu_{1},\ldots\}$ induced from $M_{\mathcal P}=\{\mu^{\mathcal P}_{0},\mu^{\mathcal P}_{1}\ldots\}$ can be obtained by setting $\mu_{i}(s_{i})=\mu_{i}^{\mathcal P}((s_{i},q_{i}))$ for all $i$.
\end{definition}


We denote $r^{M_{\mathcal P}}_{\mathcal M_{\mathcal P}}$ as a path on $\mathcal M_{\mathcal P}$ under a policy $M_{\mathcal P}$.  We say a path $r^{M_{\mathcal P}}_{\mathcal M_{\mathcal P}}$ is accepting if and only if it satisfies the Rabin acceptance condition with $F_{\mathcal P}$ as the accepting states pairs, \ie there exists a pair $(L^{\mathcal P},K^{\mathcal P})\in F_{\mathcal P}$ so that $r^{M_{\mathcal P}}_{\mathcal M_{\mathcal P}}$ intersects with $L^{\mathcal P}$ finitely many times and $K^{\mathcal P}$ infinitely many times.

The product MDP is constructed so that given a path $r^{M_{\mathcal P}}_{\mathcal M_{\mathcal P}}=(s_{0},q_{0})(s_{1},q_{1})\ldots$, the path $s_{0}s_{1}\ldots$ on $\mathcal M$ generates a word that satisfies $\phi$ if and only if the infinite sequence $q_{0}q_{1}\ldots$ is an accepting run on $\mathcal R_{\phi}$, in which case $r^{M_{\mathcal P}}_{\mathcal M_{\mathcal P}}$ is accepting.   Therefore, each accepting path of $\mathcal M_{\mathcal P}$ uniquely corresponds to a paths of $\mathcal M$ whose word satisfies $\phi$.  

\subsection{Generating the Optimal Control Strategy}
Once we obtain the product MDP $\mathcal M_{\mathcal P}=\mathcal M\times\mathcal R_{\phi}$ and the accepting states pairs $F_{\mathcal P}=\{(L^{\mathcal P}_{1},K^{\mathcal P}_{1}),\ldots,(L^{\mathcal P}_{k},K^{\mathcal P}_{k})\}$, the method to obtain a solution to Prob.~\ref{prob:maxprob} proceeds as follows:  For each pair $(L^{\mathcal P}_{i},K^{\mathcal P}_{i})\in F_{\mathcal P}$, we obtain a set of accepting maximum end components.   An accepting maximum end component for $\mathcal M_{\mathcal P}$ consists of a set of states $\overline{\mathcal S_{\mathcal P}}\subseteq\mathcal S_{\mathcal P}$ and a function $\overline{\mathcal A_{\mathcal P}}$ such that $\emptyset\neq\overline{\mathcal A_{\mathcal P}}((s,q))\subseteq \mathcal A_{\mathcal P}((s,q))$ for all $(s,q)\in\overline{\mathcal S_{\mathcal P}}$.  It has the property that, by taking actions enabled by $\overline{\mathcal A_{\mathcal P}}$, all states in $\overline{\mathcal S_{\mathcal P}}$ can reach every other state in $\overline{\mathcal S_{\mathcal P}}$ and can not reach any state outside of $\overline{\mathcal S_{\mathcal P}}$.  Furthermore, it contains no state in $L^{\mathcal P}_{i}$ and at least one state in $K^{\mathcal P}_{i}$.  In addition, it is called maximum because it is not properly contained in another accepting maximum end component.  Note that for a given pair $(L^{\mathcal P}_{i},K^{\mathcal P}_{i})$, its accepting maximal end components are pairwise disjoint.


A procedure to obtain all accepting maximum end components of an MDP is outlined in \cite{baier2008principles}.  
From probabilistic model checking (see \cite{vardi1999probabilistic, baier2008principles}), the maximum probability of satisfying the LTL formula $\phi$ for $\mathcal M$ is the same as the maximum probability of reaching any accepting maximum end component of $\mathcal M_{\mathcal P}$.   Once an accepting maximum end component $(\overline{\mathcal S_{\mathcal P}},\overline{\mathcal A_{\mathcal P}})$ is reached, then all states in $\overline{\mathcal S_{\mathcal P}}$ are reached infinitely often (and $\phi$ satisfied) with probability 1, under a policy that all actions in $\overline{\mathcal A_{\mathcal P}}$ are used infinitely often.

The maximum probability of reaching a set of states $B_{\mathcal P}\subseteq \mathcal S_{\mathcal P}$ can be obtained by the solution of a linear program.   First we find the set of states that can not reach $B_{\mathcal P}$ under any policy and denote it as $C_{\mathcal P}$ (a simple graph analysis is sufficient to find this set).  We then let $x_{p}$ denote the maximum probability of reaching the set $B_{\mathcal P}$ from a state $p\in\mathcal S_{\mathcal P}$.  We have $x_{p}=1$ if $p\in B_{\mathcal P}$, and $x_{p}=0$ if $p\in  C_{\mathcal P}$.   The values of $x_{p}$ for the remaining states can then be determined by solving a linear optimization problem:
\bea
\label{eq:LP}
&&\min \sum_{p\in S_{\mathcal P}}x_{p}, \textrm{ subject to: }  0\leq x_{p}\leq 1 \textrm{, and } \nonumber\\
&&x_{p}\geq \sum_{t\in S_{\mathcal P}} \mathcal P_{\mathcal P}(p,u,t)x_{t} \textrm{ for all } p\in S_{\mathcal P}\setminus (B_{\mathcal P}\cup C_{\mathcal P}) \nonumber\\&& \textrm{and for all } u\in \mathcal A_{\mathcal P}(p).
\eea

Once $x_{p}$ is obtained for all $p\in S_{\mathcal P}$, one can identify an action $u^{\star}$ (not necessarily unique) for each state $p\in S_{\mathcal P}\setminus (B_{\mathcal P}\cup C_{\mathcal P})$ such that:
\be
\label{eq:optU}
x_{p}=\sum_{t\in S_{\mathcal P}}\mathcal P_{\mathcal P}(p,u^{\star},t)x_{t}.
\ee  
We define a function $\mu^{\star}_{\mathcal P}:\mathcal S_{\mathcal P}\rightarrow \mathcal U$ that returns an action $u^{\star}$ satisfying (\ref{eq:optU}) if $p\in S_{\mathcal P}\setminus (B_{\mathcal P}\cup C_{\mathcal P})$ (the actions for states in $B_{\mathcal P}$ or $C_{\mathcal P}$ are irrelevant and can be chosen arbitrarily).  Then the optimal policy maximizing the probability of reaching  $B_{\mathcal P}$ is the stationary policy $\{\mu^{\star}_{\mathcal P},\mu^{\star}_{\mathcal P},\ldots\}$. 

Our desired policy $M_{\mathcal P}^{\star}$ that maximizes the probability of satisfying $\phi$ on the product MDP is the policy maximizing the probability of reaching the union of all accepting maximum end components of all accepting state pairs in $F_{\mathcal P}$, if the state is not in an accepting maximum end component.  Otherwise, the optimal policy is to use all actions allowed in the associated accepting maximal end component infinitely often in a round-robin fashion.  The solution to Prob.~\ref{prob:reformmaxprob} is then the policy $M^{\star}$ on $\mathcal M$ induced by $M^{\star}_{\mathcal P}$.  The desired reactive control strategy $C^{\star}$ as a solution to Prob.~\ref{prob:maxprob} can finally be obtained as the control strategy corresponding to $M^{\star}$ (see Prop.~\ref{prop:equivalence}).  Our overall approach is summarized in Alg.~\ref{alg:optmaxprob}.


\begin{algorithm}
\caption{Generating the optimal control strategy $C^{\star}$ given $\mathcal E$, $U$, $A$, $P_{m}$, $P_{o}$ and $\phi$}
\begin{algorithmic}[1]
\label{alg:optmaxprob}
\STATE Generate the MDP $\mathcal M$ from the environment model $\mathcal E$, the motion primitives $U$, the actions $A$, the motion model $P_{m}$ and the observation model $P_{o}$.
\STATE Translate the LTL formula $\phi$ to a deterministic Rabin automaton $\mathcal R_{\phi}$.
\STATE Generate the product MDP $\mathcal M_{\mathcal P}=\mathcal M\times \mathcal R_{\phi}$ and accepting states pairs $F_{\mathcal P}=\{(L_{1}^{\mathcal P},K_{1}^{\mathcal P}),\ldots ,(L_{k}^{\mathcal P},K_{k}^{\mathcal P})\}$.
\STATE Find all accepting maximum end components for all pairs $(L_{i}^{\mathcal P},K_{i}^{\mathcal P})\in F_{\mathcal P}$, and find their union $B_{\mathcal P}$.
\STATE Find the stationary policy $\{\mu^{\star}_{\mathcal P}, \mu^{\star}_{\mathcal P},\ldots\}$ maximizing the probability of reaching $B_{\mathcal P}$ by solving (\ref{eq:LP}) and (\ref{eq:optU}).
\STATE Generate the policy $M^{\star}_{\mathcal P}=\{\mu_{0}^{\mathcal P},\mu_{1}^{\mathcal P},\ldots\}$ as follows: $\mu_{i}^{\mathcal P}(p)=\mu^{\star}_{\mathcal P}(p)$ if $p\in \mathcal S_{\mathcal P}\setminus B_{\mathcal P}$.  Otherwise, $p$ is in at least  one accepting maximum end component.  Assuming it is $(\overline{\mathcal S_{\mathcal P}},\overline{\mathcal A_{\mathcal P}})$ and $\overline{\mathcal A_{\mathcal P}}(p)=\{u_{1},u_{2},\ldots,u_{m}\}$, then $\mu_{i}^{\mathcal P}(p)=u_{j}$ where $j=i$ mod $m$.
\STATE Generate the policy $M^{\star}=\{\mu_{0},\mu_{1},\ldots\}$ induced by $M^{\star}_{\mathcal P}$.
\STATE Generate the control strategy $C^{\star}=\{\nu_{0},\nu_{1},\ldots\}$ corresponding to $M^{\star}$ by setting $\nu_{i}(v,Z)=\mu_{i}((v,Z))$ for all~$i$.
\end{algorithmic}
\end{algorithm}

\subsection{Complexity}
\label{sec:complexity}

The complexity of our proposed algorithm is dictated by the size of the generated MDPs.  We use $|\cdot|$ to denote cardinality of a set.  The number of states in $\mathcal M$ is $|\mathcal S|=\sum_{v\in V}|Z_{v}|$.  Hence, in the worst case where all propositions $\pi\in\Pi$ can be observed with positive but less than 1 probability at all vertices $v\in V$, $|\mathcal S|=2^{|\Pi|}$. In practice, the number of propositions that  can be non-deterministically observed at a vertex is small.  For example, in an urban setting, most regions of the environment including intersections and roads have fixed atomic propositions.  The number of intersections that can be blocked is small comparing to the size of the environment. 

The size of the DRA $|Q|$ is in worst case, doubly exponential with respect to $|\Pi|$.  However, empirical studies such as \cite{klein2006experiments} have shown that in practice, the sizes of the DRAs for many LTL formulas are exponential or lower with respect to $|\Pi|$.  In robot control applications, since properties in the environment are typically assigned scarcely (meaning that each region of the environment is usually assigned a small number of properties comparing to $|\Pi|$), the size of DRA can be reduced much further by removing transitions in the DRA with inputs that can never appear in the environment, and then minimizing the DRA by removing states that can not be reached from the initial state.    

The size of the product MDP $\mathcal M_{\mathcal P}$ is $|\mathcal M|\times|Q|$.  The complexity for the algorithm to generate accepting maximal end component is at most quadratic in the size of $\mathcal M_{\mathcal P}$ (see \cite{baier2008principles}), and the complexity for finding the optimal policy from a linear program is polynomial in the size of $\mathcal M_{\mathcal P}$.  Thus, overall, our algorithm is polynomial in the size of $\mathcal M_{\mathcal P}$.


\section{Example}
\label{sec:example}
The computational framework developed in this paper is implemented in MATLAB, and here we provide an example as a case study.  Consider a robot navigating in an indoor environment as shown in Fig.~\ref{fig:room}.  
\begin{figure}[h]
\begin{center}
\includegraphics[scale=0.5]{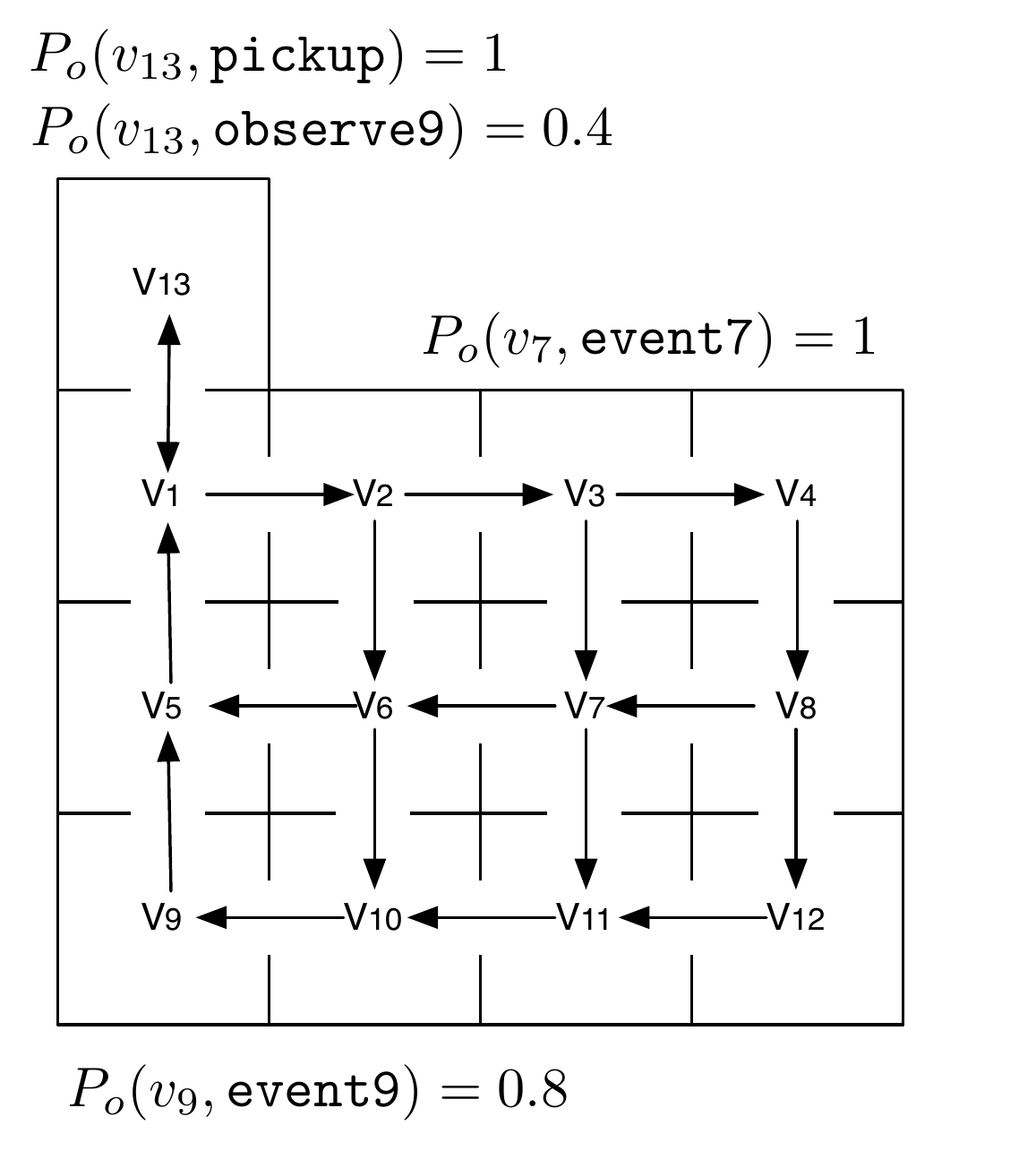}
\caption{Environment for a numerical example of the proposed approach. We assume that the set of motion primitive is $U=\{\alpha, \beta,\gamma\}$.  The number of actions available at each vertex depends on the number of arrows from that vertex to adjacent vertices.  We define the enabling function $A$ so that the motion primitive $\alpha$ is enabled at all vertices, $\beta$ is enabled at vertices $v_{1}$, $v_{6}$ and $v_{7}$, and $\gamma$ is enabled at vertices $v_{2}$, $v_{3}$, $v_{6}$, $v_{7}$ and $v_{8}$.}%
\label{fig:room}
\end{center}
\end{figure}
Each region of the environment is represented by a vertex $v_i$, and the arrows represent allowable transitions between regions.  In this case study, we choose the motion primitives arbitrarily (see the caption of Fig.~\ref{fig:room}).  In practice, they can either correspond to low level control actions such as ``turn left'', ``turn right'' and ``go straight'', or high level commands such as ``go from region 1 to region 2'', which can then be achieved by a sequence of low level control actions.

The goal of the robot is to perform a persistent surveillance mission on regions $v_7$ and $v_9$, described as follows: The robot can pickup (or receive) a surveillance task at region $v_{13}$.  With probability 0.4 the robot receives the task denoted \texttt{observe9}.  Otherwise, the task is \texttt{observe7}.  The task \texttt{observe7} (or \texttt{observe9}) is completed by traveling to region $v_7$ (or $v_9$), and observing some specified event.  In region $v_7$, the robot observes the event ($\texttt{event7}$) with probability $1$.  In region $v_9$, each time the robot enters the region, there is a probability of $0.8$ that it observes the event ($\texttt{event9}$).  Thus, the robot may have to visit $v_9$ multiple times before observing $\texttt{event9}$.  Once the robot observes the required event, it must return to $v_{13}$ and pickup a new task.

This surveillance mission can be represented by four atomic propositions $\{\texttt{pickup},\texttt{observe9},\texttt{event7},\texttt{event9}\}$. (the task $\texttt{observe7}$ can be written as $\notltl \texttt{observe9}$).  The propositions $\texttt{pickup}$ and $\texttt{observe7}$ are assigned to $v_{13}$, with $P_{o}(v_{13},\texttt{pickup}) = 1$ and $P_{o}(v_{13},\texttt{observe9}) = 0.4$.  The proposition $\texttt{event7}$ is assigned to $v_7$ with $P_{o}(v_{7},\texttt{event7}) = 1$ and $\texttt{event9}$ is assigned to $v_9$ with $P_{o}(v_{9},\texttt{event9}) = 0.8$.

The surveillance mission can be written as the following LTL formula:
\begin{align*}
&\phi = \gl\ev \texttt{pickup} \andltl  \nonumber \\ 
& \gl \left(\texttt{pickup}\andltl \notltl \texttt{observe9} \Rightarrow \nextltl (\notltl \texttt{pickup} \un \texttt{event7})\right)\nonumber\\
& \andltl \gl \left(\texttt{pickup}\andltl \texttt{observe9} \Rightarrow \nextltl (\notltl \texttt{pickup} \un \texttt{event9})\right).
\end{align*}

The first line of $\phi$, $\gl\ev \texttt{pickup}$, enforces that the robot must repeatedly pick up tasks.  The second line pertains to task $\texttt{observe7}$ and third line pertains to task $\texttt{observe9}$.  These two lines ensure that a new task cannot be picked up until the current task is completed (\ie the desired event is observed).  Note that if \texttt{event9} is observed after observing \texttt{event7}, then the formula $\phi$ is not violated (and similarly if \texttt{event7} is observed after observing \texttt{event9}).

The MDP $\mathcal M$ generated from the environment is shown in Fig.~\ref{fig:MDPexample}.  For this example, we have arbitrarily chosen values for the probability transition function $P_{m}$.  In practice, probabilities of transition under actuation and measurement errors can be obtained from experiments or accurate simulations (see \cite{LaWaAnBe-ICRA10}).  The number of states in the MDP $\mathcal M$ is $|S|=15$.

\begin{figure}[h]
\begin{center}
\includegraphics[scale=0.4]{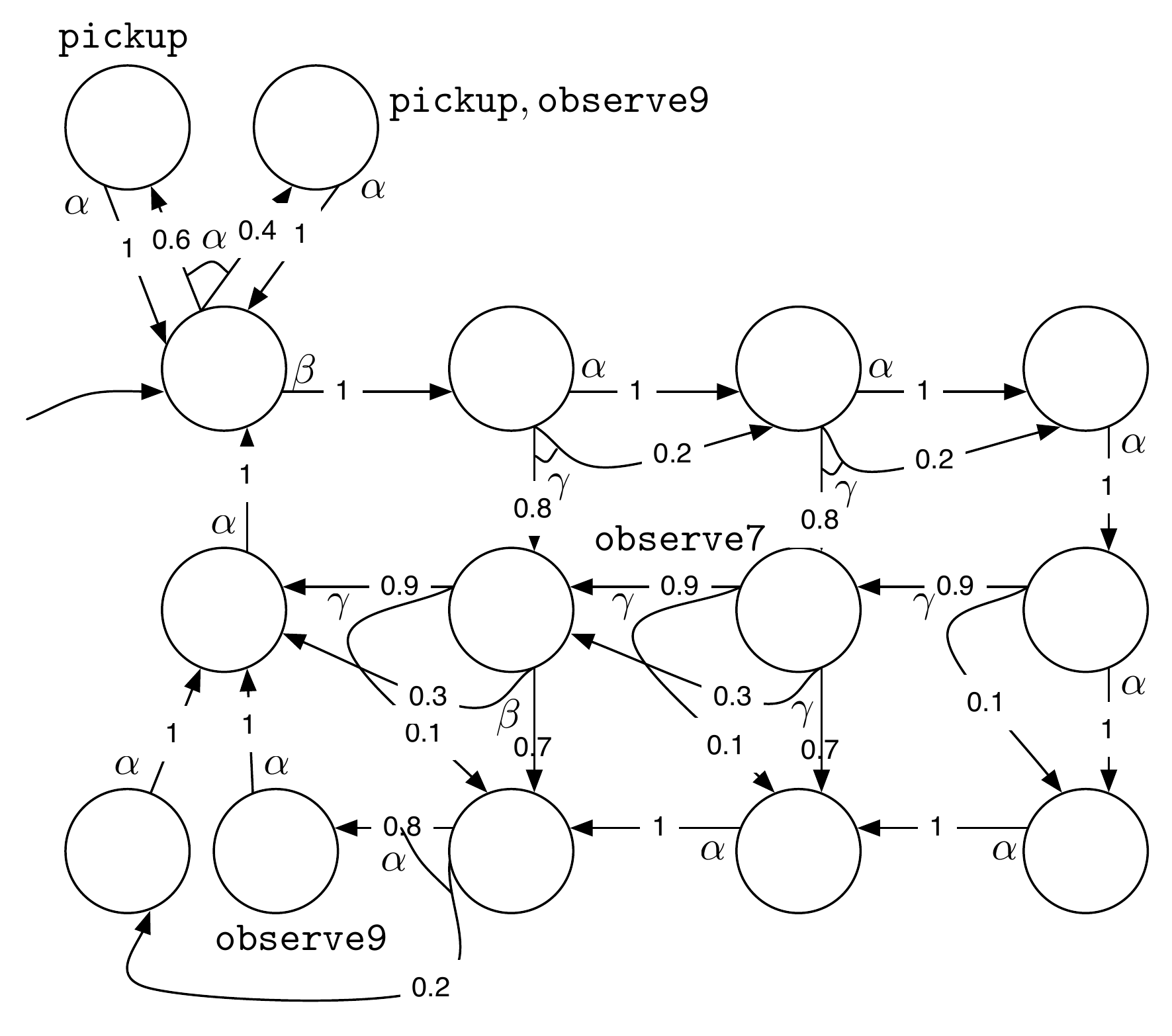}
\caption{MDP $\mathcal M$ generated from the environment with given $U$, $A$, $P_{o}$ and $P_{m}$. The initial state $s_{0}$ is marked by an incoming arrow ($\iota(s_{0})=1$). }%
\label{fig:MDPexample}
\end{center}
\end{figure}

We generated the deterministic Rabin automaton $\mathcal R_{\phi}$ using the ltl2dstar tool (see \cite{ltl2dstar}).  The number of states $|Q|$ is $52$.  Thus, the product MDP $\mathcal M_{\mathcal P}$ has $780$ states.  For the DRA generated, there is only one set in $F$, \ie $F=\{(L,K)\}$, with $1$ state in $L$ and $18$ states in $K$.  Thus, the number of states in $L^{\mathcal P}$ is $15$ and the number of states in $K^{\mathcal P}$ is $270$.  There is one accepting maximum end component in $\mathcal M_{\mathcal P}$, and it contains $17$ states.

Using the implementation of Alg.~\ref{alg:optmaxprob} we computed the maximum probability of satisfying the specification from the initial state and the optimal control strategy.  The Algorithm ran in approximately 7 seconds on a MacBook Pro computer with a 2.5 GHz dual core processor.  For this example the maximum probability is $1$, implying that the corresponding optimal control strategy almost surely satisfies $\phi$.  To illustrate the control strategy, a sample execution is shown in Fig.~\ref{fig:samplePaths}.  

\begin{figure}[h]
\begin{center}
\includegraphics[scale=0.4]{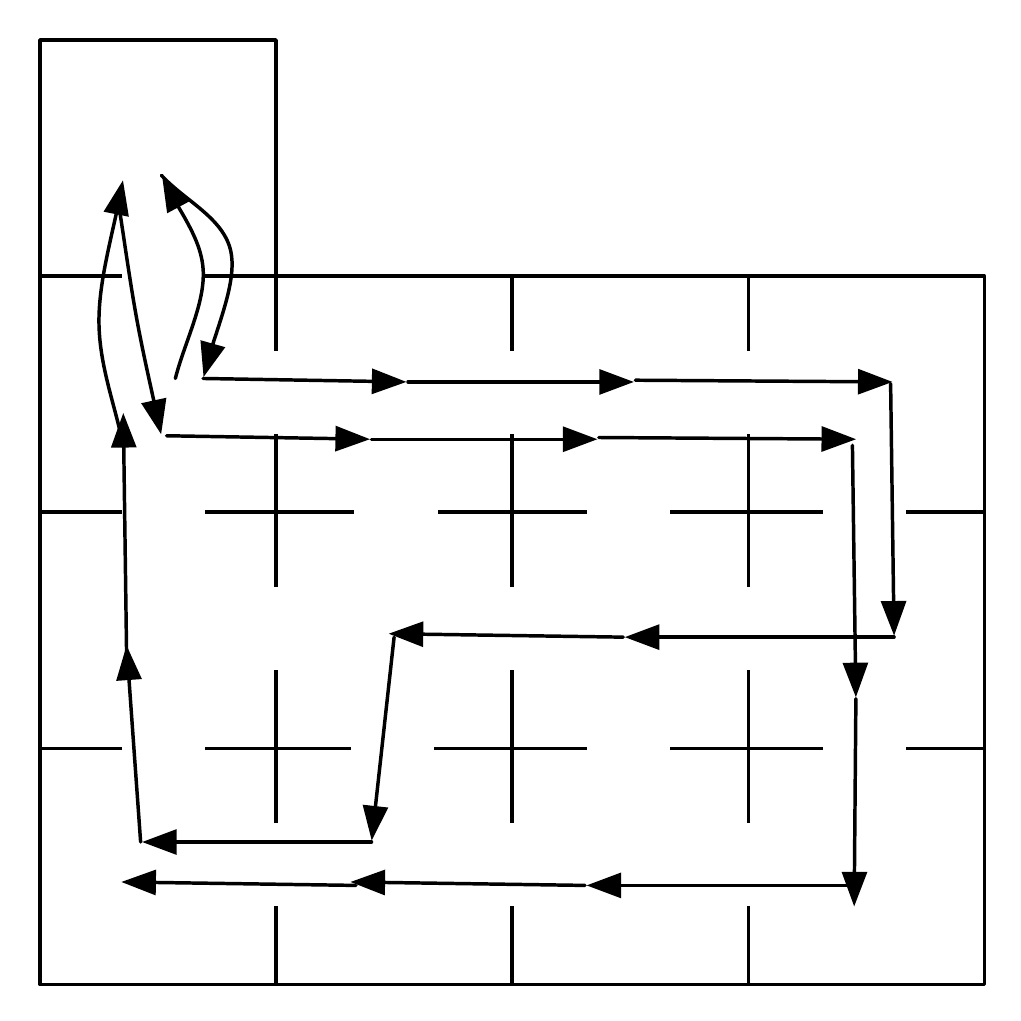}
\caption{A sample path of the robot with the optimal control strategy.  The word observed by the sample path is $\texttt{pickup},\texttt{event7}, \texttt{event9}, \{\texttt{pickup},\texttt{observe9}\}$, $\texttt{event9}, \ldots$.} 
\label{fig:samplePaths}
\end{center}
\end{figure}

\section{Conclusions and Final Remarks}
\label{sec:concl}

We presented a method to generate a robot control strategy that maximizes the probability to accomplish a task.  The robot motion in the environment was modeled as a graph and the task was given as a Linear Temporal Logic (LTL) formula over a set of properties that can be satisfied at the vertices with some probability.  We allowed for noisy sensors and actuators by assuming that a control action enables several transitions with known probabilities.  We reduced this problem to one of generating a control policy for a Markov Decision Process such that the probability of satisfying an LTL formula over its states is maximized.  We then provided a complete solution to this problem adapting existing probabilistic model checking tools.

We are currently pursuing several future directions.  We are looking at proposition observation models that are not independently distributed.  These models arise when the current truth value of the proposition gives information about the future truth value.  We are also looking at methods for optimizing the robot control strategy for a suitable cost function when costs are assigned to actions of an MDP.  The second direction will build on our recent results on optimal motion planning with LTL constraints~\cite{SLS-JT-CB-DR:10b}.

\end{document}